\titleformat{\section}[runin]{\bfseries\filcenter}{\thesection}{1em}{}
\renewcommand\footnotemark{}
\renewcommand{\thesection}{\arabic{section}}
\title{\large \bf Equality of certain automorphism groups of finite $p$-groups}
\author{\small \bf Deepak Gumber\footnote{The research of first author is supported by University Grants Commission of India under the Research Award Scheme} and Hemant Kalra\\
\small \em School of Mathematics and Computer Applications\\
\small \em Thapar University, Patiala - 147 004,
India\\
}
\date{}
\DeclareMathOperator{\IA}{IA}
\DeclareMathOperator{\Inn}{Inn}
\DeclareMathOperator{\Aut}{Aut}
\DeclareMathOperator{\Cent}{Aut_c}
\DeclareMathOperator{\Hom}{Hom}
\newtheorem{thm}{Theorem}[section]
\newtheorem{lm}[thm]{Lemma}
\newtheorem{cor}[thm]{Corollary}
\begin{document}
\maketitle
\begin{abstract}
\noindent  Let $G$ be a finite $p$-group and let $\Aut(G)$ denote the full automorphism group of $G$. In the recent past, there has been interest in finding necessary and sufficient conditions on $G$ such that certain subgroups of $\Aut(G)$ are equal. We prove a technical lemma and, as a consequence, obtain some new results and short and alternate proofs of some known results of this type.    
\end{abstract}

\vspace{2ex}

\noindent {\bf 2010 Mathematics Subject Classification:}
20D45, 20D15.

\vspace{2ex}

\noindent {\bf Keywords:} Central automorphism, IA-automorphism, $p$-group.

\section{Introduction} Let $G$ be a finite non-abelian $p$-group and let $M_1,M_2,N_1,N_2$ be normal subgroups of $G$. For normal subgroups $X$ and $Y$ of $G$, let $\Aut^X(G)$ and $\Aut_Y(G)$ denote the subgroups of $\Aut(G)$ centralizing $G/X$ and $Y$ respectively. We denote the intersection $\Aut^X(G)\cap\Aut_Y(G)$ by $\Aut_Y^X(G)$. In the recent past, many results have been proved which give necessary and sufficient conditions on $G$ such that $\Aut_Y^X(G)=\Inn(G),Z(\Inn(G))$; or $\Aut_{N_1}^{M_1}(G)=\Aut_{N_2}^{M_2}(G)$ with particular choices of $M_i$ and $N_i$ (see e.g. [3-5, 7-13]). Quite recently, Azhdari and Malayeri \cite[Theorem B, Corollary C]{azhmal} have found conditions on certain $M_i$ and $N_i$ so that $\Aut_{N_1}^{M_1}(G)=\Aut_{N_2}^{M_2}(G)$. We prove a short technical lemma, Lemma 2.2, and as a consequence, obtain very short and easy proofs of these main results of Azhdari and Malayeri. Subsequently, we also obtain some new results of this type and alternate proofs of main results of Attar \cite[Theorem A]{att3}, Jafari \cite[Theorem]{jaf} and Rai \cite[Theorem B(1)]{rai}.

Notations are mostly standard. By $\Hom(G,A)$ we denote the group of all homomorphisms of $G$ into an abelian group $A$ and by $C_n$ we denote the cyclic group of order $n$. The rank, exponent and nilpotence class of $G$ are respectively denoted as $d(G)$, $\exp(G)$ and $cl(G)$. A non-abelian group $G$ that has no non-trivial abelian direct factor is said to be purely non-abelian. An automorphism $\alpha$ of $G$ is called a central automorphism if it centralizes $G/Z(G)$, or equivalently, $x^{-1}\alpha(x)\in Z(G)$ for all $x\in G$. By $\Aut_c(G)$ we denote the group of all central automorphisms of $G$, and by $C^*$ we denote the group of all those central automorphisms of $G$ which fix $Z(G)$ element-wise. An automorphism $\alpha$ of $G$ is called an IA-automorphism if it centralizes the abelianized group $G/G'$. The group of all IA-automorphisms is denoted as  $\IA(G)$ and the group of all those IA-automorphisms which fix $Z(G)$ element-wise is denoted as $\IA(G)^*$.

\section{Automorphism groups of $G$}
While proving the equality of different automorphism groups of $G$, the foremost tool has been to express the group $\Aut^{X}_{Y}(G)$ in the form $\Hom(A,B)$ for suitable subgroups or quotient groups $A$ and $B$ of $G$. The trick is very well-known since old days. Our next lemma is a little modification of arguments of Alperin \cite[Lemma 3]{alp} and Fournelle \cite[Section 2]{fou}.

\begin{lm}\label{ML}
Let $G$ be any group and $X$ be a central subgroup of $G$ contained in a normal subgroup $Y$ of $G$. Then $\Aut^{X}_{Y}(G)\simeq\Hom(G/Y,X)$.
\end{lm}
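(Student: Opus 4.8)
The plan is to construct an explicit isomorphism $\Phi\colon\Aut^X_Y(G)\to\Hom(G/Y,X)$ by recording, for each admissible automorphism, the displacement map it induces. Given $\alpha\in\Aut^X_Y(G)$, define $f_\alpha\colon G\to X$ by $f_\alpha(x)=x^{-1}\alpha(x)$; this indeed lands in $X$ precisely because $\alpha$ centralizes $G/X$. First I would verify that $f_\alpha$ is a homomorphism into the abelian group $X$: writing $\alpha(x)=xf_\alpha(x)$ and expanding $f_\alpha(xx')$, the centrality of $X$ lets me slide the central factor $f_\alpha(x)$ past $x'$, yielding $f_\alpha(xx')=f_\alpha(x)f_\alpha(x')$. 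Since $\alpha$ fixes $Y$ pointwise, $f_\alpha$ is trivial on $Y$ and hence factors through $G/Y$; I denote the induced homomorphism by $\bar f_\alpha$ and set $\Phi(\alpha)=\bar f_\alpha$.

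Next I would check that $\Phi$ is a group homomorphism. For $\alpha,\beta\in\Aut^X_Y(G)$ I compute $f_{\alpha\beta}(x)=x^{-1}\alpha(\beta(x))=x^{-1}\alpha\big(xf_\beta(x)\big)$. Here is the one place where the hypothesis $X\subseteq Y$ does essential work: since $f_\beta(x)\in X\subseteq Y$ and $\alpha$ fixes $Y$ elementwise, $\alpha$ leaves $f_\beta(x)$ unchanged, so $\alpha\big(xf_\beta(x)\big)=\alpha(x)f_\beta(x)=xf_\alpha(x)f_\beta(x)$, giving $f_{\alpha\beta}=f_\alpha f_\beta$. Thus $\Phi$ respects multiplication. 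Injectivity is then immediate, since $\Phi(\alpha)$ trivial forces $x^{-1}\alpha(x)=1$ for all $x$, i.e. $\alpha=\mathrm{id}$.

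For surjectivity I would reverse the construction. Given $\phi\in\Hom(G/Y,X)$, let $\bar\phi\colon G\to X$ be its pullback along the quotient map $G\to G/Y$, and define $\alpha_\phi(x)=x\,\bar\phi(x)$. Centrality of $X$ again shows $\alpha_\phi$ is an endomorphism. The only genuinely delicate point, and the one I expect to be the main obstacle since $G$ is an arbitrary (possibly infinite) group and so bijectivity is not automatic from injectivity, is exhibiting a two-sided inverse. I would check directly that $x\mapsto x\,\bar\phi(x)^{-1}$ inverts $\alpha_\phi$, using that $\bar\phi(x)^{-1}\in X\subseteq Y$ lies in the kernel of $\bar\phi$. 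One then verifies that $\alpha_\phi$ centralizes $G/X$ and fixes $Y$ pointwise, so $\alpha_\phi\in\Aut^X_Y(G)$, and that $\Phi(\alpha_\phi)=\phi$. This establishes that $\Phi$ is the desired isomorphism.
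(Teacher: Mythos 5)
Your proof is correct and complete: the displacement map $f_\alpha(x)=x^{-1}\alpha(x)$, its factoring through $G/Y$, the homomorphism property of $\Phi$ (where you rightly isolate the role of $X\subseteq Y$), and the explicit two-sided inverse $x\mapsto x\,\bar\phi(x)^{-1}$ establishing surjectivity without any finiteness assumption are all verified soundly. The paper offers no proof of this lemma at all---it defers to the cited arguments of Alperin and Fournelle---and your argument is exactly the standard explicit-isomorphism construction those references employ, so it matches the intended proof.
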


Let $X$ and $Y$ be two finite abelian $p$-groups and let  $X\simeq C_{p^{x_1}}\times C_{p^{x_2}}\times\ldots\times C_{p^{x_h}}$ and $Y\simeq C_{p^{y_1}}\times C_{p^{y_2}}\times\ldots\times C_{p^{y_k}}$ be the cyclic decompositions of $X$ and $Y$, where $x_i\ge x_{i+1}$ and $y_i\ge y_{i+1}$ are positive integers. If either $X$ is a subgroup or is a quotient group of $Y$, then $h\le k$ and $x_i\le y_i$ for $1\le i\le h$. Consider the situation when $d(X)=d(Y)$ and $X$ is a proper subgroup or a proper quotient group of $Y$. In these circumstances, $h=k$ and there certainly exists an $r,\;1\le r\le h$, such that $x_r<y_r$ and $x_j=y_j$ for $r+1\le j\le h$. For this unique fixed $r$, let $var(X,Y)=p^{x_r}$. In other words, $var(X,Y)$ denotes the order of the last cyclic factor of $X$ whose order is less than that of corresponding cyclic factor of $Y$.

\begin{lm}\label{MT} Let $A,B,C$ and $D$ be finite abelian $p$-groups with $B$ a subgroup of $C$ and $D$ a quotient group of $A$. Then
\begin{enumerate}
\item[$(i)$] $\Hom(A,B)= \Hom(A,C)$ 
if and only if either $B=C$ or $d(B)=d(C)$ and $\exp(A)\le var(B,C)$,
\item[$(ii)$] $|\Hom(D,B)|=|\Hom(A,B)|$
if and only if either $D=A$ or $d(D)=d(A)$ and  $\exp(B)\le var(D,A)$.
\end{enumerate}
\end{lm}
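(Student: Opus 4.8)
The plan is to reduce everything to counting and to the structure theory of finite abelian $p$-groups, using the well-known formula for $|\Hom(A,B)|$ in terms of the cyclic invariants. Writing $A\simeq\prod_i C_{p^{a_i}}$ and $B\simeq\prod_j C_{p^{b_j}}$ with the exponents in non-increasing order, one has the standard count
\[
|\Hom(A,B)|=\prod_{i,j}p^{\min(a_i,b_j)}.
\]
Since the paper has reduced the expression of each relevant automorphism group to a $\Hom$ via Lemma \ref{ML}, the whole of Lemma \ref{MT} is really a statement about when two such $\Hom$-groups coincide (part (i), where one is contained in the other) or merely have equal order (part (ii)). So the first thing I would do is record this formula and set up the cyclic decompositions $B\simeq C_{p^{b_1}}\times\cdots\times C_{p^{b_m}}$ inside $C\simeq C_{p^{c_1}}\times\cdots\times C_{p^{c_n}}$, and $D\simeq C_{p^{d_1}}\times\cdots$ as a quotient of $A$, keeping the ordering conventions and the fact (quoted in the excerpt) that a subgroup or quotient has $d(B)\le d(C)$ and $b_i\le c_i$ termwise.

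For part (i), the key observation is that $\Hom(A,B)\le\Hom(A,C)$ always holds when $B\le C$ (compose a homomorphism into $B$ with the inclusion $B\hookrightarrow C$), so the containment is automatic and the content is: when is it an \emph{equality}? I would argue equality holds if and only if the two groups have the same order, and then compute orders via the formula above. The ratio $|\Hom(A,C)|/|\Hom(A,B)|$ is $\prod_i p^{\sum_j(\min(a_i,c_j)-\min(a_i,b_j))}$, and this equals $1$ precisely when every term vanishes. The case $B=C$ is trivially one direction. Otherwise, if $B\ne C$ but $d(B)=d(C)=m=n$, then $b_j\le c_j$ with strict inequality somewhere; I would show that the exponent difference $\min(a_i,c_j)-\min(a_i,b_j)$ vanishes for all $i,j$ exactly when $\exp(A)\le var(B,C)$. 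The forward direction: the definition of $var(B,C)=p^{b_r}$ picks out the last index where $b_r<c_r$, and for $j\le r$ the factor $b_j$ could be anything $\le c_j$; the condition $\exp(A)=p^{a_1}\le p^{b_r}$ forces $a_i\le b_r\le b_j\le c_j$ for every $j\le r$ (using $b_j\ge b_r$ by ordering), so $\min$ is $a_i$ in both, while for $j>r$ we have $b_j=c_j$. Conversely, if $\exp(A)>var(B,C)$, I would exhibit the pair $(i,j)=(1,r)$ as one where the difference is strictly positive, breaking equality. A subtle point here is that if $d(B)<d(C)$ then $C$ simply has extra cyclic factors that $B$ lacks, and these contribute genuine extra homomorphisms unless $A$ is trivial, so equality cannot hold; I need to treat this to justify the ``$d(B)=d(C)$'' requirement in the statement.

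Part (ii) is the dual statement, and I would handle it by the same order computation with the roles reversed: now $D$ is a quotient of $A$, so $d_i\le a_i$ termwise and $d(D)\le d(A)$, and $|\Hom(D,B)|=\prod p^{\min(d_i,b_j)}\le\prod p^{\min(a_i,b_j)}=|\Hom(A,B)|$. Here there is no natural inclusion of the $\Hom$-groups, only an inequality of orders, which is exactly why the statement is phrased in terms of $|\cdot|$ rather than set equality; the argument is otherwise symmetric, with $var(D,A)$ playing the role of $var(B,C)$ and the exponent of $B$ playing the role of $\exp(A)$. The main obstacle I anticipate is the bookkeeping in the $var$ analysis: making rigorous that the condition ``$\exp\le var$'' is \emph{equivalent} to the vanishing of all the exponent differences, rather than merely sufficient. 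Concretely, I must verify both that indices $j>r$ contribute nothing because $b_j=c_j$ there, and that for the critical index $r$ the inequality is tight in the right direction, and pin down exactly which $(i,j)$ term first becomes positive when the hypothesis fails. Once that equivalence is nailed down, both parts follow from a clean comparison of the two product formulas.
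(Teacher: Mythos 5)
Your proposal is correct and follows essentially the same route as the paper's own proof: both rest on the product formula $|\Hom(A,B)|=\prod_{i,j}p^{\min(\alpha_i,\beta_j)}$, reduce part $(i)$ to an order comparison via the inclusion $\Hom(A,B)\le\Hom(A,C)$, rule out $d(B)<d(C)$ by the extra cyclic factors of $C$, and settle the remaining case by a term-by-term analysis at the index $r$ defining $var(B,C)$ (your critical pair $(1,r)$ is exactly the paper's contradiction $\beta_r=\min\{\alpha_1,\beta_r\}=\min\{\alpha_1,\gamma_r\}>\beta_r$). The paper likewise writes out only part $(i)$ and dismisses $(ii)$ as symmetric, just as you do.
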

\begin{proof} We prove only $(i)$ as the proof is similar for $(ii)$. Let
\[
\begin{array}{rcl}
A &\simeq & C_{p^{\alpha_1}}\times C_{p^{\alpha_2}}\times\ldots
\times C_{p^{\alpha_l}},\\
 
B & \simeq & C_{p^{\beta_1}}\times C_{p^{\beta_2}}\times\ldots \times C_{p^{\beta_m}}, \;\mbox{and}\\ 
C &\simeq & C_{p^{\gamma_1}}\times C_{p^{\gamma_2}}\times\ldots
\times C_{p^{\gamma_n}}
\end{array}
\]
be the cyclic decompositions of $A$, $B$ and $C$, where $\alpha_i\geq\alpha_{i+1},\;\beta_i\geq\beta_{i+1},$ and $\gamma_i\geq\gamma_{i+1}$ are positive integers.
 First suppose that 
$\Hom(A,B)= \Hom(A,C)$
 and $B<C$. Then $$\displaystyle\prod_{i=1}^l\displaystyle\prod_{j=1}^mp^{\mathrm {min}\{\alpha_i,
\beta_j\}}=\displaystyle\prod_{i=1}^l\displaystyle\prod_{k=1}^np^{\mathrm {min}\{\alpha_i,\gamma_k\}}.$$
Since $m\le n$ and $\beta_j\le \gamma_j$ for each $1\le j\le m$,  $\mathrm{min}\lbrace\alpha_i,\beta_j\rbrace\le \mathrm{min}\lbrace\alpha_i,\gamma_j\rbrace$. If $m<n$, then  $|\Hom(A,B)|<|\Hom(A,C)|$, which is not so. Thus $m=n$ and $\mathrm{min}\lbrace\alpha_i,\beta_j\rbrace= \mathrm{min}\lbrace\alpha_i,\gamma_j\rbrace$ for all $i$ and $j$. Let $var(B,C)=p^{\beta_r}$, $1\le r\le m$. Observe that $\exp(A)\le var(B,C)$, because if $p^{\alpha_1}>p^{\beta_r}$, then $\beta_r=\mathrm{min}\lbrace\alpha_1,\beta_r\rbrace=\mathrm{min}\lbrace\alpha_1,\gamma_r\rbrace>\beta_r$, a contradiction.

Conversely, if $B=C$, then result holds trivially. We therefore suppose that $B<C$, $d(B)=d(C)=m$ and  $\exp(A)\le var(B,C)$. Then $\alpha_i\le \alpha_1\le \beta_r<\gamma_r$ for $1\le i\le l$, and  $\beta_j=\gamma_j$ for $r+1\le j\le m$. It follows that
 
$$|\Hom(A,B)|=\prod_{i=1}^l\prod_{j=1}^mp^{\mathrm {min}\{\alpha_i,
\beta_j\}}=p^{r(\alpha_1+\cdots+\alpha_l)}\prod_{i=1}^l\prod_{j=r+1}^mp^{\mathrm {min}\{\alpha_i,\beta_j\}}$$
and
$$|\Hom(A,C)|=\prod_{i=1}^l\prod_{j=1}^mp^{\mathrm {min}\{\alpha_i,
\gamma_j\}}=p^{r(\alpha_1+\cdots+\alpha_l)}\prod_{i=1}^l\prod_{j=r+1}^mp^{\mathrm {min}\{\alpha_i,
\beta_j\}}.$$
Thus $|\Hom(A,B)|=|\Hom(A,C)|$ and hence $\Hom(A,B)=\Hom(A,C),$ because $\Hom(A,B)$ is a subgroup of $\Hom(A,C)$.
\end{proof}

\begin{cor}[{\em cf.} {\cite[Theorem B]{azhmal}}]
Let $G$ be a finite non-abelian $p$-group. Let $M_1$, $M_2$, $N_1$ and $N_2$ be normal subgroups of $G$ such that $M_i\le Z(G)\cap N_i$ for $i=1,2$, $M_1\le M_2$ and $N_2\le N_1$. Then $\Aut_{N_1}^{M_1}(G)=\Aut_{N_2}^{M_2}(G)$ if and only if one of the following conditions holds:
\begin{enumerate}

\item[$(i)$] $M_1=M_2$ and either $G/G'N_1=G/G'N_2$ or $d(G/G'N_1)=d(G/G'N_2)$ and  $\exp(M_1)\le var(G/G'N_1,G/G'N_2)$.
\item[$(ii)$] $G/G'N_1=G/G'N_2$ and either $M_1=M_2$ or  $d(M_1)=d(M_2)$ and $\exp(G/G'N_1)\le var(M_1,M_2)$.
\end{enumerate}
\end{cor}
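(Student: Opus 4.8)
The plan is to convert both sides into homomorphism groups via Lemma~\ref{ML} and then read off the equality condition from Lemma~\ref{MT}. Since $M_i\le Z(G)\cap N_i$, Lemma~\ref{ML} gives $\Aut_{N_i}^{M_i}(G)\simeq\Hom(G/N_i,M_i)$ for $i=1,2$; and because $M_i$ is central, hence abelian, every homomorphism $G/N_i\to M_i$ is trivial on $G'N_i/N_i$, so $\Hom(G/N_i,M_i)=\Hom(G/G'N_i,M_i)$. Writing $P_i=G/G'N_i$, I therefore have $|\Aut_{N_i}^{M_i}(G)|=|\Hom(P_i,M_i)|$. Next I would record the containment $\Aut_{N_1}^{M_1}(G)\le\Aut_{N_2}^{M_2}(G)$: fixing $N_1$ pointwise forces fixing the smaller $N_2$ pointwise, and $x^{-1}\alpha(x)\in M_1\le M_2$ shows such an $\alpha$ centralizes $G/M_2$. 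Consequently the two subgroups are equal if and only if they have the same order, i.e.\ if and only if $|\Hom(P_1,M_1)|=|\Hom(P_2,M_2)|$.

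The hypotheses also give $M_1\le M_2$ and, since $G'N_2\le G'N_1$, that $P_1$ is a quotient of $P_2$. Hence there is a chain of subgroups $\Hom(P_1,M_1)\le\Hom(P_1,M_2)\le\Hom(P_2,M_2)$, the first inclusion induced by $M_1\hookrightarrow M_2$ and the second by inflation along $P_2\twoheadrightarrow P_1$. The desired order equality holds precisely when both inclusions are equalities. Applying Lemma~\ref{MT}$(i)$ with $(A,B,C)=(P_1,M_1,M_2)$ shows the first is an equality iff
\[
\text{(a)}\quad M_1=M_2,\ \text{ or }\ d(M_1)=d(M_2)\ \text{ and }\ \exp(P_1)\le var(M_1,M_2),
\]
and applying Lemma~\ref{MT}$(ii)$ with $(D,A,B)=(P_1,P_2,M_2)$ shows the second is an equality iff
\[
\text{(b)}\quad P_1=P_2,\ \text{ or }\ d(P_1)=d(P_2)\ \text{ and }\ \exp(M_2)\le var(P_1,P_2).
\]
Thus equality of the automorphism groups is equivalent to the conjunction of (a) and (b).

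It then remains to match ``(a) and (b)'' with ``(i) or (ii)''. If $M_1=M_2$, then (a) is automatic and, since $\exp(M_2)=\exp(M_1)$, condition (b) is exactly the bracketed alternative in (i); symmetrically, $P_1=P_2$ makes (b) automatic and turns (a) into the bracketed alternative in (ii). Conversely, each of (i), (ii) plainly yields (a) and (b). The one genuine point, and the step I expect to be the main obstacle, is to rule out that (a) and (b) can hold while both $M_1<M_2$ and $P_1<P_2$ are proper. Assuming this, (a) forces $\exp(P_1)\le var(M_1,M_2)$ and (b) forces $\exp(M_2)\le var(P_1,P_2)$. Since $var(X,Y)$ is by definition the order of a cyclic factor of $X$, it is at most $\exp(X)$; chaining these gives
\[
\exp(M_2)\le var(P_1,P_2)\le\exp(P_1)\le var(M_1,M_2).
\]
On the other hand, for the proper subgroup $M_1<M_2$ with $d(M_1)=d(M_2)$, the index $r$ defining $var(M_1,M_2)$ satisfies $\mu_r<\nu_r\le\nu_1$ (in the obvious notation for the cyclic exponents of $M_1$ and $M_2$), so $var(M_1,M_2)=p^{\mu_r}<p^{\nu_1}=\exp(M_2)$. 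This contradicts the displayed chain, and hence at least one of $M_1=M_2$, $P_1=P_2$ must hold. This completes the equivalence of the automorphism-group equality with (i) or (ii); everything apart from this last contradiction is bookkeeping with the two lemmas.
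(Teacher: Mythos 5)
Your proof is correct, and while it shares the paper's overall skeleton---Lemma~\ref{ML} to identify $\Aut_{N_i}^{M_i}(G)$ with $\Hom(G/G'N_i,M_i)$, the containment $\Aut_{N_1}^{M_1}(G)\le\Aut_{N_2}^{M_2}(G)$ to reduce equality of the two automorphism groups to equality of their orders, and Lemma~\ref{MT} to decode that order condition---it diverges genuinely at the key step. Writing $P_i=G/G'N_i$ as you do, the paper obtains the dichotomy ``$M_1=M_2$ or $P_1=P_2$'' by citing Lemma D of Curran and McCaughan \cite{curmac}, and only afterwards applies Lemma~\ref{MT} separately in each of the two resulting cases. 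You never invoke that external result: you interpolate $\Hom(P_1,M_1)\le\Hom(P_1,M_2)$ and embed $\Hom(P_1,M_2)$ into $\Hom(P_2,M_2)$ by inflation, so that the order equality becomes the conjunction of your conditions (a) and (b), each supplied by one half of Lemma~\ref{MT}; the dichotomy is then recovered internally from the two elementary observations that $var(X,Y)\le\exp(X)$ always, and that $var(X,Y)<\exp(Y)$ whenever $X$ is a proper subgroup or proper quotient of $Y$ with $d(X)=d(Y)$, which together give the contradiction $\exp(M_2)\le var(P_1,P_2)\le\exp(P_1)\le var(M_1,M_2)<\exp(M_2)$ if both $M_1<M_2$ and $P_1\ne P_2$ were proper. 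The payoff of your route is a self-contained proof (nothing is needed beyond the paper's own two lemmas) and a single uniform argument that covers both directions of the equivalence at once; the payoff of the paper's route is brevity on the page, since quoting Lemma D spares it the interpolation and the $var$ estimates. Indeed, your final contradiction is exactly the content that Lemma D encapsulates in this setting, so the two proofs are complementary rather than in conflict.
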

\begin{proof} It follows from Lemma~\ref{ML} that for $i=1$ and 2,  
$$\Aut_{N_i}^{M_i}(G)\simeq\Hom(G/N_i, M_i)\simeq\Hom(G/G'N_i, M_i).$$
First suppose that $\Aut_{N_1}^{M_1}(G)=\Aut_{N_2}^{M_2}(G)$. Then 
\begin{equation}
|\Hom(G/G'N_1, M_1)|=|\Hom(G/G'N_2, M_2)|,
\end{equation}
and therefore either $M_1=M_2$ or $G/G'N_1=G/G'N_2$ by \cite[Lemma D]{curmac}. Assume that $M_1=M_2$ and $G/G'N_1$ is a proper quotient group of $G/G'N_2$. Then proof of $(i)$ follows from Lemma~\ref{MT}$(ii)$ by taking $D=G/G'N_1, \;A=G/G'N_2$ and $B=M_1=M_2$. Next assume that $G/G'N_1=G/G'N_2$ and $M_1<M_2$. Then it follows from (1) that 
$$\Hom(G/G'N_1, M_1)=\Hom(G/G'N_2, M_2).$$
The proof of $(ii)$ now follows from Lemma 2.2$(i)$ by taking $A=G/G'N_1=G/G'N_2$, $B=M_1$ and $C=M_2$.

Conversely, first assume that condition $(ii)$ holds. Then, taking $A=G/G'N_1=G/G'N_2$, $B=M_1$ and $C=M_2$ in Lemma 2.2$(i)$, we get $\Hom(G/G'N_1, M_1)=\Hom(G/G'N_2, M_2)$. It follows that $|\Aut_{N_1}^{M_1}(G)|=|\Aut_{N_2}^{M_2}(G)|$ and hence $\Aut_{N_1}^{M_1}(G)=\Aut_{N_2}^{M_2}(G)$ because $\Aut_{N_1}^{M_1}(G)\le\Aut_{N_2}^{M_2}(G)$. Now assume that condition $(i)$ holds. Then, taking $D=G/G'N_1$, $A=G/G'N_2$ and $B=M_1=M_2$ in Lemma 2.2$(ii)$, we get $|\Hom(G/G'N_1, M_1)|=|\Hom(G/G'N_2, M_2)|$ and hence $\Aut_{N_1}^{M_1}(G)=\Aut_{N_2}^{M_2}(G)$.
\end{proof}

Taking $M_1=M, N_1=N$ and $M_2=N_2=Z(G)$ in the above corollary, we get the following.

\begin{cor}[{\em cf.} {\cite[Corollary C(i)]{azhmal}}]
Let $G$ be a finite non-abelian $p$-group.
Let $M$ and $N$ be normal subgroups of $G$ such that $M\le Z(G)\le N$. Then $\Aut{_{N}^{M}}(G)=C^*$ if and only if one of the following conditions hold:
\begin{enumerate}

\item[$(i)$] $M=Z(G)$ and either $G/G'N=G/G'Z(G)$ or $d(G/G'N)=d(G/G'Z(G))$ and  $\exp(M)\le var(G/G'N,G/G'Z(G))$.
\item[$(ii)$] $G/G'N=G/G'Z(G)$ and either $M=Z(G)$ or $d(M)=d(Z(G))$ and $\exp(G/G'N)\le var(M,Z(G))$.

\end{enumerate}
\end{cor}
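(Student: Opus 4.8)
The plan is to derive this corollary directly from the previous corollary (the one labeled \emph{cf.} \cite[Theorem B]{azhmal}) by choosing the parameters appropriately, rather than re-deriving everything from the Hom-group machinery. Observe that $C^*$ is precisely the group of central automorphisms fixing $Z(G)$ element-wise, which in the notation of the paper is $\Aut_{Z(G)}^{Z(G)}(G)$: an automorphism lies in $C^*$ iff it centralizes $G/Z(G)$ (central) and fixes $Z(G)$ (so it centralizes the subgroup $Z(G)$). Thus the substitution $M_2=N_2=Z(G)$ turns $\Aut_{N_2}^{M_2}(G)$ into $C^*$, and the statement becomes a special case of the preceding corollary with $M_1=M$ and $N_1=N$.

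First I would verify that all the hypotheses of the previous corollary are met under the substitution. We need $M_i\le Z(G)\cap N_i$ for $i=1,2$, together with $M_1\le M_2$ and $N_2\le N_1$. For $i=2$, $M_2=N_2=Z(G)$ gives $Z(G)\le Z(G)\cap Z(G)=Z(G)$, which holds. For $i=1$, the hypothesis $M\le Z(G)\le N$ of the present corollary gives $M\le Z(G)$ and $M\le N$, hence $M\le Z(G)\cap N=N_1$, as required. The inclusion $M_1=M\le Z(G)=M_2$ is immediate from $M\le Z(G)$, and $N_2=Z(G)\le N=N_1$ is immediate from $Z(G)\le N$. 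So every hypothesis transfers cleanly.

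Next I would simply rewrite the two conditions $(i)$ and $(ii)$ of the previous corollary under this substitution. Condition $(i)$ there reads: $M_1=M_2$ and either $G/G'N_1=G/G'N_2$ or [$d(G/G'N_1)=d(G/G'N_2)$ and $\exp(M_1)\le var(G/G'N_1,G/G'N_2)$]. Substituting $M_1=M$, $M_2=Z(G)$, $N_1=N$, $N_2=Z(G)$, the equality $M_1=M_2$ becomes $M=Z(G)$, and $G/G'N_2=G/G'Z(G)$, giving exactly condition $(i)$ of the present statement. Identically, condition $(ii)$ there becomes condition $(ii)$ here after the same textual substitution. The only genuine point requiring a line of justification is the identification $\Aut_{Z(G)}^{Z(G)}(G)=C^*$; once that is in place the corollary is a verbatim specialization.

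The main obstacle, such as it is, is purely bookkeeping: confirming that the normal-subgroup hypotheses of the parent corollary are not vacuously violated by the equalities $M_2=N_2=Z(G)$, and confirming that $C^*$ coincides with $\Aut_{Z(G)}^{Z(G)}(G)$ under the paper's conventions. Neither step involves any real computation, so I would expect the proof to be a single short paragraph invoking the previous corollary with the stated choice of parameters.
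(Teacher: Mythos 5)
Your proposal is correct and is exactly the paper's own approach: the paper obtains this corollary in one line by taking $M_1=M$, $N_1=N$ and $M_2=N_2=Z(G)$ in the preceding corollary ({\em cf.} Theorem B), just as you do. Your additional verification that the hypotheses transfer and that $C^*=\Aut_{Z(G)}^{Z(G)}(G)$ is the same bookkeeping the paper leaves implicit.
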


\begin{cor} [{\em cf.} {\cite[Corollary C(ii)]{azhmal}}] Let $G$ be a finite non-abelian $p$-group. Let $M$ and $N$ be normal subgroups of $G$ such that $M\le Z(G)\le N$. Then $\Aut_{N}^{M}(G)=\Aut_c(G)$ if and only if one of the following conditions hold:
\begin{enumerate}

\item[$(i)$] $M=Z(G)$ and either $N\le G'$ or $d(G/G'N)=d(G/G')$ and $\exp(M)\le var(G/G'N,G/G')$.
\item[$(ii)$] $N\le G'$ and either $M=Z(G)$ or  $d(M)=d(Z(G))$ and $\exp(G/G')\le var(M,Z(G))$.
\end{enumerate}
\end{cor}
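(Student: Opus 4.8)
The plan is to reduce the asserted equivalence to a statement about orders of $\Hom$-groups and then to dispatch it with \cite[Lemma D]{curmac} together with the two parts of Lemma~\ref{MT}, in complete analogy with the previous two corollaries. First I would note that since $M\le Z(G)$, any automorphism that centralizes $G/M$ automatically centralizes $G/Z(G)$; hence $\Aut_N^M(G)\le\Aut_c(G)$, and the desired equality holds if and only if $|\Aut_N^M(G)|=|\Aut_c(G)|$. By Lemma~\ref{ML}, and because every homomorphism into the abelian group $M$ factors through $G/G'$, we have $\Aut_N^M(G)\simeq\Hom(G/N,M)\simeq\Hom(G/G'N,M)$, exactly as before.

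The step I expect to be the main obstacle is putting $|\Aut_c(G)|$ into the same $\Hom$-form, and this is precisely where the present corollary differs from the one for $C^*$. One does have $\Aut_c(G)=\Aut_{G'}^{Z(G)}(G)$, since every central automorphism fixes $G'$ pointwise; however Lemma~\ref{ML} cannot be applied to this description, because its hypothesis requires the central subgroup $Z(G)$ to lie inside the fixed normal subgroup $G'$, and $Z(G)\le G'$ need not hold. (This is the reason $C^*$, which is $\Aut_{Z(G)}^{Z(G)}(G)$ and does satisfy the hypothesis, was accessible directly through Lemma~\ref{ML}, whereas $\Aut_c(G)$ is not.) Instead I would invoke the classical Adney--Yen isomorphism $\Aut_c(G)\simeq\Hom(G/G',Z(G))$, valid for a purely non-abelian $p$-group, to obtain $|\Aut_c(G)|=|\Hom(G/G',Z(G))|$; this is the only ingredient not already available from the lemmas of this section.

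Granting these two identifications, the equality $\Aut_N^M(G)=\Aut_c(G)$ becomes $|\Hom(G/G'N,M)|=|\Hom(G/G',Z(G))|$, in which $M$ is a subgroup of $Z(G)$ and $G/G'N$ is a quotient of $G/G'$. For the forward direction I would apply \cite[Lemma D]{curmac} to force either $M=Z(G)$ or $G'N=G'$, i.e. $N\le G'$. If $M=Z(G)$, then Lemma~\ref{MT}$(ii)$ with $D=G/G'N$, $A=G/G'$ and $B=Z(G)$ yields condition $(i)$; if $N\le G'$, then $G/G'N=G/G'$ and the order equality becomes $\Hom(G/G',M)=\Hom(G/G',Z(G))$, so Lemma~\ref{MT}$(i)$ with $A=G/G'$, $B=M$ and $C=Z(G)$ yields condition $(ii)$. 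For the converse, assuming $(i)$ one reads off $|\Hom(G/G'N,M)|=|\Hom(G/G',Z(G))|$ from Lemma~\ref{MT}$(ii)$, and assuming $(ii)$ one gets $\Hom(G/G',M)=\Hom(G/G',Z(G))$ from Lemma~\ref{MT}$(i)$; in either case $|\Aut_N^M(G)|=|\Aut_c(G)|$, and the containment $\Aut_N^M(G)\le\Aut_c(G)$ promotes this to the required equality.
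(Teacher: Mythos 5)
Your overall strategy matches the paper's: identify $\Aut_N^M(G)$ with $\Hom(G/G'N,M)$, put $|\Aut_c(G)|$ into the form $|\Hom(G/G',Z(G))|$ via Adney--Yen, split into cases with \cite[Lemma D]{curmac}, and finish with the two parts of Lemma 2.2. But there is a genuine gap at exactly the step you flag as ``the only ingredient not already available'': the Adney--Yen formula $|\Aut_c(G)|=|\Hom(G/G',Z(G))|$ is valid only for \emph{purely non-abelian} $G$, and purely non-abelianness is not a hypothesis of the corollary. You write ``Granting these two identifications'' and never verify it, in either direction of the proof. This is not a formality: for a group with a non-trivial abelian direct factor the formula fails, so the chain of order equalities you rely on is simply not available until the hypothesis of Adney--Yen has been derived from what you are actually given.

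The paper spends most of its proof on precisely this point, and the arguments are different in the two directions. In the forward direction, it observes that $Z(G)\le N$ forces $\Aut_N^M(G)\le C^*$, so the assumed equality $\Aut_N^M(G)=\Aut_c(G)$ gives $\Aut_c(G)=C^*$, and then $G$ is purely non-abelian by \cite[Lemma 2.4]{yad}; only after that may Adney--Yen be invoked. In the converse direction, each of the conditions $(i)$, $(ii)$ must itself be shown to imply purely non-abelianness: under $(ii)$ one has $Z(G)\le N\le G'$, which rules out an abelian direct factor; under $(i)$, either again $N\le G'$, or $d(G/G'N)=d(G/G')$, which forces $G'N$, and hence $Z(G)\le G'N$, to contain no generating element of $G$, once more excluding an abelian direct factor. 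Your proposal becomes correct once these two verifications are inserted; as written, both directions rest on an unjustified hypothesis.
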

\begin{proof} Observe that $\Aut_{N}^{M}(G)\simeq \Hom(G/G'N,M)$. First suppose that $\Aut_{N}^{M}(G)=\Aut_c(G)$. Then $\Aut_c(G)=C^*$ because $M\le Z(G)\le N$. Thus $G$ is purely non-abelian by \cite[Lemma 2.4]{yad} and hence $|\Aut_c(G)|=|\Hom(G/G', Z(G))|$ by \cite[Theorem 1]{adnyen}. It follows that
$$|\Hom(G/G'N,M)|=|\Hom(G/G', Z(G))|,$$ 
  and therefore either $N\le G'$ or $M=Z(G)$ by \cite[Lemma D]{curmac}. The proof now follows as in Corollary 2.3.

Conversely, if condition $(ii)$ holds, then $Z(G)\le G'$; and if condition $(i)$ holds, then $d(G/G'N)=d(G/G')$ implies that $G'N$ and hence $Z(G)$ does not contain any generating element of $G$. In either of the cases, $G$ is purely non-abelian and hence $|\Aut_c(G)|=|\Hom(G/G', Z(G))|$ by \cite[Theorem 1]{adnyen}. The proof now follows as in Corollary 2.3.
 \end{proof}

Some particular cases of this corollary give the following results of Rai \cite{rai}, Attar\cite{att3} and Jafari \cite{jaf}.

\begin{cor} [{\cite[Theorem B(1)]{rai}}]
Let $G$ be a finite non-abelian $p$-group. Then
$\IA(G)^*=\Aut_c(G)$ if and only if
$G'=Z(G)$.
\end{cor}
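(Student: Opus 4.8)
The plan is to recognise $\IA(G)^{*}$ as a special case of the group $\Aut_{N}^{M}(G)$ treated in Corollary 2.5 and then simply read off the equality criterion from that corollary. Writing ``centralizes $G/G'$'' as $\Aut^{G'}(G)$ and ``fixes $Z(G)$ element-wise'' as $\Aut_{Z(G)}(G)$, one has, for \emph{every} finite group $G$, the identity $\IA(G)^{*}=\Aut^{G'}(G)\cap\Aut_{Z(G)}(G)=\Aut_{Z(G)}^{G'}(G)$. The catch is that Corollary 2.5 requires the superscript subgroup to lie inside the centre, i.e. $M\le Z(G)$, so it can be invoked on $\Aut_{Z(G)}^{G'}(G)$ only when $G'\le Z(G)$. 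Hence the first order of business in the forward direction is to force $G$ to have nilpotence class at most $2$.

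For the forward implication I would proceed as follows. The key elementary observation is that $\Inn(G)\le\IA(G)^{*}$ unconditionally: each inner automorphism $i_{g}$ satisfies $x^{-1}i_{g}(x)=[x,g]\in G'$, so it is an IA-automorphism, and it fixes every central element, so $i_{g}\in\IA(G)^{*}$. Therefore the hypothesis $\IA(G)^{*}=\Aut_c(G)$ yields $\Inn(G)\le\Aut_c(G)$. Since $i_{g}$ is a central automorphism precisely when $g\in Z_{2}(G)$, this containment is equivalent to $Z_{2}(G)=G$, that is, to $G'\le Z(G)$. Thus $G$ has class $2$ and Corollary 2.5 now applies to $\Aut_{Z(G)}^{G'}(G)=\Aut_c(G)$ with $M=G'$ and $N=Z(G)$. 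By that corollary the equality forces condition $(i)$ or condition $(ii)$; but here $M=G'$ and $N=Z(G)$, condition $(i)$ carries the clause $M=Z(G)$ and condition $(ii)$ carries the clause $N\le G'$, and each of these, in the presence of $G'\le Z(G)$, yields $G'=Z(G)$.

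For the converse I would assume $G'=Z(G)$, which by itself makes $G$ of class $2$, and re-apply Corollary 2.5 with $M=N=Z(G)=G'$. The clause $M=Z(G)$ of condition $(i)$ holds, and its alternative clause $N\le G'$ holds as well because $N=Z(G)=G'$; hence Corollary 2.5 returns $\Aut_{Z(G)}^{G'}(G)=\Aut_c(G)$, i.e. $\IA(G)^{*}=\Aut_c(G)$. Equivalently, when $G'=Z(G)$ one checks directly that the IA- and the central automorphisms coincide, and that every central automorphism fixes $G'=Z(G)$ element-wise (commutators $[a,b]$ are unchanged when $a,b$ are multiplied by central elements), so $\IA(G)^{*}=C^{*}=\Aut_c(G)$.

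The main obstacle is precisely the forward direction: Corollary 2.5 is not directly available for $\IA(G)^{*}=\Aut_{Z(G)}^{G'}(G)$ because its hypothesis $G'\le Z(G)$ is not assumed in the statement. Everything hinges on the preliminary reduction to nilpotence class $2$, which is delivered cleanly by the containment $\Inn(G)\le\IA(G)^{*}$ together with the standard fact that $\Inn(G)\le\Aut_c(G)$ forces $G'\le Z(G)$. Once class $2$ is secured, the remainder is a routine inspection of the defining conditions of Corollary 2.5.
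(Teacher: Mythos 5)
Your proposal is correct and follows essentially the same route as the paper: reduce the forward direction to nilpotence class $2$ (via $\Inn(G)\le\IA(G)^{*}$, which the paper leaves implicit as ``easy to see''), identify $\IA(G)^{*}=\Aut_{Z(G)}^{G'}(G)$, and invoke Corollary 2.5 with $M=G'$ and $N=Z(G)$. The only difference is that you spell out the class-$2$ reduction and the case analysis of conditions $(i)$ and $(ii)$ in full detail, which the paper compresses into one line.
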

\begin{proof}
It is easy to see that if $G'=Z(G)$, then $\IA(G)^*=\Aut_c(G)$.
Conversely, if $\IA(G)^*=\Aut_c(G)$, then $cl(G)=2$ and thus $\IA(G)^*=\Aut_{Z(G)}^{G'}(G)=\Aut_c(G).$ The result now follows by taking $M=G'$ and $N=Z(G)$ in Corollary 2.5.
\end{proof}

\begin{cor}[{\em cf.} {\cite[Theorem A]{att3}} and {\cite[Theorem]{jaf}}] Let $G$ be a finite non-abelian $p$-group. Then
$\Aut_c(G)=C^*$ if and only if either $Z(G)\le G'$ or $d(G/G'Z(G))=d(G/G')$ and $\exp(Z(G))\le var(G/G'Z(G),G/G')$.
\end{cor}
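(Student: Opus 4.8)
The plan is to obtain this corollary as the special case $M=N=Z(G)$ of Corollary 2.5, so that almost all of the work is already done and only a short bookkeeping step remains. First I would translate $C^*$ into the $\Aut_Y^X$ notation: an automorphism lies in $C^*$ precisely when it centralizes $G/Z(G)$ (i.e.\ is central) and fixes $Z(G)$ element-wise, so $C^*=\Aut_{Z(G)}^{Z(G)}(G)$. Taking $M=N=Z(G)$ in Corollary 2.5 therefore turns $\Aut_N^M(G)$ into $C^*$, and the equality $\Aut_N^M(G)=\Aut_c(G)$ treated there becomes exactly $\Aut_c(G)=C^*$, the equality we wish to characterize. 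The hypotheses $M\le Z(G)\le N$ of Corollary 2.5 hold (with equality throughout), so the corollary applies.

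Next I would substitute $M=N=Z(G)$ into conditions $(i)$ and $(ii)$ of Corollary 2.5 and simplify. In $(i)$ the clause ``$M=Z(G)$'' becomes automatic, so $(i)$ reduces to: either $Z(G)\le G'$, or $d(G/G'Z(G))=d(G/G')$ together with $\exp(Z(G))\le var(G/G'Z(G),G/G')$ --- which is precisely the disjunction appearing in the present statement. In $(ii)$ the inner clause ``$M=Z(G)$'' is likewise automatic, so the inner ``either \ldots\ or \ldots'' is satisfied and $(ii)$ collapses to its outer requirement $N\le G'$, that is, $Z(G)\le G'$.

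Finally I would note that the reduced condition $(ii)$, namely $Z(G)\le G'$, is already the first alternative of the reduced condition $(i)$; hence ``$(i)$ or $(ii)$'' is equivalent to $(i)$ alone, which is exactly the condition stated in Corollary 2.7. This finishes the derivation.

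I do not anticipate a genuine obstacle, since the statement is an immediate specialization; the only point needing care is the bookkeeping identifying $\Aut_{Z(G)}^{Z(G)}(G)$ with $C^*$ and checking that the two conditions merge, in particular that $var(G/G'Z(G),G/G')$ is well defined (it is, because whenever we are not in the case $Z(G)\le G'$ the group $G/G'Z(G)$ is a proper quotient of $G/G'$ of the same rank). Should a self-contained proof be preferred, I would instead use $C^*\simeq\Hom(G/G'Z(G),Z(G))$ from Lemma~\ref{ML} and $\Aut_c(G)\simeq\Hom(G/G',Z(G))$ for purely non-abelian $G$, observe that $C^*\le\Aut_c(G)$, and apply Lemma~\ref{MT}$(ii)$ with $D=G/G'Z(G)$, $A=G/G'$ and $B=Z(G)$; but this route must separately dispose of the case where $G$ is not purely non-abelian, which is precisely what Corollary 2.5 already handles, so the specialization is the cleaner argument.
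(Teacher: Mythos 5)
Your proposal is correct and is essentially the paper's own proof, which simply takes $M=N=Z(G)$ in Corollary 2.5; your additional bookkeeping (identifying $C^*$ with $\Aut_{Z(G)}^{Z(G)}(G)$ and checking that the two conditions of Corollary 2.5 collapse to the stated disjunction) is accurate and fills in exactly what the paper leaves implicit.
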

\begin{proof}
The proof follows by taking $M=N=Z(G)$ in Corollary 2.5.
\end{proof}

We next obtain some new results which are immediate consequences of Lemma 2.2.

\begin{cor}
Let $G$ be a finite $p$-group of nilpotence class $2$. Then
$IA(G)=\IA(G)^*$ if and only if either $G'=Z(G)$ or $d(G/Z(G))=d(G/G')$ and $\exp(G')\le var(G/Z(G),G/G')$.
\end{cor}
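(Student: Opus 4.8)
The plan is to reduce both $\IA(G)$ and $\IA(G)^*$ to $\Hom$-groups via Lemma~\ref{ML}, and then read off the stated dichotomy from Lemma~\ref{MT}(ii). Since $cl(G)=2$ we have $G'\le Z(G)$, so $G'$ is central and abelian, and $G/Z(G)$ is abelian as well. The first step I would carry out is the preliminary observation that \emph{every} IA-automorphism of a class $2$ group fixes $G'$ element-wise: if $\alpha\in\IA(G)$ then $\alpha(x)=xc_x$ with $c_x\in G'\le Z(G)$ for each $x$, whence, the factors $c_x,c_y$ being central, $\alpha([x,y])=[xc_x,yc_y]=[x,y]$ on the commutator generators of $G'$. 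Consequently $\IA(G)=\Aut^{G'}(G)=\Aut^{G'}_{G'}(G)$, and since $G'$ is central with $G'\le G'$, Lemma~\ref{ML} gives $\IA(G)\simeq\Hom(G/G',G')$.

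Next I would treat $\IA(G)^*$ in the same way. By definition $\IA(G)^*=\Aut^{G'}(G)\cap\Aut_{Z(G)}(G)=\Aut^{G'}_{Z(G)}(G)$, and as $G'$ is central and contained in $Z(G)$, Lemma~\ref{ML} yields $\IA(G)^*\simeq\Hom(G/Z(G),G')$. I would then note the trivial containment $\IA(G)^*\le\IA(G)$ (fixing $Z(G)$ pointwise is stronger than being an IA-automorphism), so that, $G$ being finite, the equality $\IA(G)=\IA(G)^*$ is equivalent to the equality of orders $|\Hom(G/Z(G),G')|=|\Hom(G/G',G')|$.

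Finally, because $G'\le Z(G)$ the natural projection exhibits $G/Z(G)$ as a quotient of $G/G'$, so the last equality is exactly the situation covered by Lemma~\ref{MT}(ii), applied with $D=G/Z(G)$, $A=G/G'$ and $B=C=G'$. Since that lemma is an equivalence, it delivers both directions at once: the order equality holds iff either $D=A$, i.e. $Z(G)=G'$, or $d(G/Z(G))=d(G/G')$ together with $\exp(G')\le var(G/Z(G),G/G')$, which is precisely the asserted condition.

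The hard part, and the only place where the hypothesis $cl(G)=2$ is genuinely needed, is the preliminary identification $\IA(G)=\Aut^{G'}_{G'}(G)$: for groups of higher class an IA-automorphism need not centralise $G'$, and moreover $G'$ need not be central, so Lemma~\ref{ML} (which requires a central $X$ contained in $Y$) could not be invoked with $X=G'$ to obtain the clean description $\IA(G)\simeq\Hom(G/G',G')$. Once this reduction is secured, the remainder is a direct substitution into Lemma~\ref{MT}(ii) with no further computation.
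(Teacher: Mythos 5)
Your proof is correct and follows essentially the same route as the paper's: both identify $\IA(G)\simeq\Hom(G/G',G')$ and $\IA(G)^*\simeq\Hom(G/Z(G),G')$ via Lemma~\ref{ML} and then invoke Lemma~\ref{MT}(ii) with $D=G/Z(G)$, $A=G/G'$ and $B=G'$. Your preliminary observation that in class $2$ every IA-automorphism fixes $G'$ element-wise, so that $\IA(G)=\Aut^{G'}_{G'}(G)$ and Lemma~\ref{ML} is actually applicable, is precisely the detail the paper leaves implicit in its one-line appeal to Lemma 2.1.
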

\begin{proof} It follows from Lemma 2.1 that $\IA(G)\simeq \Hom(G/G',G')$ and $\IA(G)^*\simeq\Hom(G/Z(G),G').$ The result now follows from Lemma~\ref{MT}$(ii)$ by taking $D=G/Z(G)$, $A=G/G'$ and $B=G'$.
\end{proof}

\begin{cor}
Let $G$ be a finite non-abelian $p$-group. Then
$\IA(G)^*=C^*$ if and only if either $G'=Z(G)$ or $G'<Z(G)$, $d(G')=d(Z(G))$ and $\exp(G/Z(G))\le var(G',Z(G))$.
\end{cor}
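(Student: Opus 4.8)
The plan is to realise both groups as Hom-groups via Lemma~\ref{ML} and then read off the answer from Lemma~\ref{MT}$(i)$. In the notation of the introduction, $\IA(G)^*=\Aut^{G'}_{Z(G)}(G)$ (it centralizes $G/G'$ and fixes $Z(G)$ pointwise) while $C^*=\Aut^{Z(G)}_{Z(G)}(G)$. To push $\IA(G)^*$ through Lemma~\ref{ML} I need $G'$ to be a \emph{central} subgroup contained in $Z(G)$, i.e. $cl(G)=2$; the central subgroup $Z(G)$ of $C^*$ is unconditionally fine. So the first order of business, in the forward direction, is to force class $2$ out of the mere equality $\IA(G)^*=C^*$.

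For the forward direction I would first record that every inner automorphism is an IA-automorphism and fixes $Z(G)$ element-wise, so $\Inn(G)\le\IA(G)^*$. Hence if $\IA(G)^*=C^*$, then $\Inn(G)\le C^*\le\Aut_c(G)$; since an inner automorphism $\alpha_g$ is central precisely when $[x,g]\in Z(G)$ for all $x$, letting $g$ range over $G$ forces $G'\le Z(G)$, and as $G$ is non-abelian this gives $cl(G)=2$. With class $2$ in hand, Lemma~\ref{ML} yields $\IA(G)^*\simeq\Hom(G/Z(G),G')$ and $C^*\simeq\Hom(G/Z(G),Z(G))$; because $G'\le Z(G)$ the former is a subgroup of the latter, so the equality of these automorphism groups is equivalent to $\Hom(G/Z(G),G')=\Hom(G/Z(G),Z(G))$. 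Applying Lemma~\ref{MT}$(i)$ with $A=G/Z(G)$, $B=G'$ and $C=Z(G)$ then returns exactly the stated alternatives: either $G'=Z(G)$, or $d(G')=d(Z(G))$ and $\exp(G/Z(G))\le var(G',Z(G))$ (the latter forcing the proper inclusion $G'<Z(G)$ that makes $var(G',Z(G))$ meaningful).

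For the converse I would observe that each of the two listed conditions already entails $G'\le Z(G)$ (equality in the first case, proper containment in the second), so $cl(G)=2$ and Lemma~\ref{ML} again presents $\IA(G)^*$ and $C^*$ as the nested Hom-groups above. Feeding the given condition into the ``if'' half of Lemma~\ref{MT}$(i)$ gives $\Hom(G/Z(G),G')=\Hom(G/Z(G),Z(G))$, and since $\IA(G)^*\le C^*$ this is equivalent to $\IA(G)^*=C^*$.

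The only genuinely non-routine step is the reduction to $cl(G)=2$: Lemma~\ref{ML} is simply inapplicable to $\IA(G)^*$ unless $G'$ is central, and the containment $\Inn(G)\le\IA(G)^*$ is the lever that supplies this. Once class $2$ is secured the argument is a direct substitution into Lemma~\ref{MT}$(i)$, entirely parallel to the proof of Corollary~2.8, so I expect no further obstacle there.
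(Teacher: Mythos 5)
Your proposal is correct and follows essentially the same route as the paper's own proof: force nilpotence class $2$ from the equality $\IA(G)^*=C^*$, use Lemma~2.1 to write $\IA(G)^*\simeq\Hom(G/Z(G),G')$ and $C^*\simeq\Hom(G/Z(G),Z(G))$, and then apply Lemma~2.2$(i)$ with $A=G/Z(G)$, $B=G'$ and $C=Z(G)$. The only difference is one of detail: you spell out the class-$2$ reduction (via $\Inn(G)\le\IA(G)^*\le\Aut_c(G)$, hence $G'\le Z(G)$) and the converse substitution, both of which the paper states without proof.
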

\begin{proof} Observe that if $\IA(G)^*=C^*$, then nilpotence class of $G$ is 2 and hence $\IA(G)^*\simeq\Hom(G/Z(G),G')$ and $C^*\simeq\Hom(G/Z(G),Z(G))$ by Lemma 2.1. The result now follows from Lemma~\ref{MT}$(i)$ by taking $A=G/Z(G)$, $B=G'$ and $C=Z(G)$.
\end{proof}

\begin{cor}
Let $G$ be a finite non-abelian $p$-group. Then
$\IA(G)=C^*$ if and only if either $G'=Z(G)$ or $G'<Z(G)$, $d(G')=d(Z(G))$, $d(G/Z(G))=d(G/G')$ and $\exp(G')=var(G/Z(G),G/G')=\exp(G/Z(G))= var(G',Z(G))$.
\end{cor}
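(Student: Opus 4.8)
The plan is to reduce the equality $\IA(G)=C^*$ to the two equalities already settled in Corollaries 2.9 and 2.10, namely $\IA(G)=\IA(G)^*$ and $\IA(G)^*=C^*$, and then to merge their numerical conditions. The first step is to observe that $\IA(G)=C^*$ forces $cl(G)=2$. Indeed, $\Inn(G)\subseteq\IA(G)$ always, and $C^*\subseteq\Aut_c(G)$, so the hypothesis gives $\Inn(G)\subseteq\Aut_c(G)$, which is equivalent to $G'\le Z(G)$; since $G$ is non-abelian this means $cl(G)=2$. In the converse direction each alternative of the statement already contains $G'\le Z(G)$, so $cl(G)=2$ holds there too, and the isomorphisms $\IA(G)\simeq\Hom(G/G',G')$, $\IA(G)^*\simeq\Hom(G/Z(G),G')$ and $C^*\simeq\Hom(G/Z(G),Z(G))$ furnished by Lemma~\ref{ML} are all available.

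Next I would record the reduction. Since $cl(G)=2$, every IA-automorphism is central, so $\IA(G)^*=\IA(G)\cap C^*$. Consequently $\IA(G)=C^*$ holds if and only if both $\IA(G)=\IA(G)^*$ and $\IA(G)^*=C^*$ hold: one implication is immediate, and for the other, if $\IA(G)=C^*$ then $\IA(G)^*=\IA(G)\cap C^*$ equals $\IA(G)$ and $C^*$ simultaneously. This turns the problem into taking the conjunction of the condition of Corollary 2.9 and the condition of Corollary 2.10. If $G'=Z(G)$, both conditions hold through their first alternatives and we land in the first alternative of the present statement. The substance is therefore the case $G'<Z(G)$, where the first alternatives fail and we are forced to have $d(G/Z(G))=d(G/G')$ together with $\exp(G')\le var(G/Z(G),G/G')$, and $d(G')=d(Z(G))$ together with $\exp(G/Z(G))\le var(G',Z(G))$.

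The crux, and the one genuinely computational point, is that these two inequalities collapse to the four-fold equality in the statement. Here I would use the elementary fact that for a proper subgroup or proper quotient $X$ of $Y$ with $d(X)=d(Y)$ one has $var(X,Y)\le\exp(X)$, since $var(X,Y)$ is the order of one cyclic factor of $X$ whereas $\exp(X)$ is the order of its largest factor. Applying this with $X=G/Z(G)$ and with $X=G'$ gives $var(G/Z(G),G/G')\le\exp(G/Z(G))$ and $var(G',Z(G))\le\exp(G')$. Chaining these with the two inequalities coming from Corollaries 2.9 and 2.10 produces the cycle
\[
\exp(G')\le var(G/Z(G),G/G')\le\exp(G/Z(G))\le var(G',Z(G))\le\exp(G'),
\]
which forces all four quantities to coincide. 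Together with the two rank equalities already obtained, this is precisely the second alternative of the statement. I expect the closing of this inequality cycle to be the only real obstacle.

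For the converse in the case $G'<Z(G)$, I would read the cycle backwards: the assumed equalities give in particular $\exp(G')\le var(G/Z(G),G/G')$ and $\exp(G/Z(G))\le var(G',Z(G))$, together with the two rank equalities, so the hypotheses of Corollaries 2.9 and 2.10 are met. These yield $\IA(G)=\IA(G)^*$ and $\IA(G)^*=C^*$, and hence $\IA(G)=C^*$. Everything apart from the inequality cycle is routine bookkeeping with the identity $\IA(G)^*=\IA(G)\cap C^*$ and the two quoted corollaries.
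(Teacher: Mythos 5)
Your proposal is correct and takes essentially the same route as the paper: the paper's proof likewise reduces $\IA(G)=C^*$ to the conjunction of $\IA(G)=\IA(G)^*$ and $\IA(G)^*=C^*$ (its Corollaries 2.8 and 2.9 --- your references to ``2.9 and 2.10'' are off by one, but you name the results by content, so no harm is done) and then closes exactly the same cycle $\exp(G')\le var(G/Z(G),G/G')\le\exp(G/Z(G))\le var(G',Z(G))\le\exp(G')$ using the observation that $var(X,Y)\le\exp(X)$. The only difference is that you make explicit what the paper leaves implicit, namely why $\IA(G)=C^*$ forces $cl(G)=2$ and the identity $\IA(G)^*=\IA(G)\cap C^*$ behind the assertion $\IA(G)=\IA(G)^*=C^*$.
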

\begin{proof} That the conditions are sufficient follows from Corollaries 2.8 and 2.9. Conversely suppose that $\IA(G)=C^*$. Then nilpotence class of $G$ is 2 and $\IA(G)=\IA(G)^*=C^*$. It follows from Corollaries 2.8 and 2.9 that either $G'=Z(G)$ or $G'<Z(G)$, $d(G')=d(Z(G))$, $d(G/Z(G))=d(G/G')$ and $\exp(G')\le var(G/Z(G),G/G')\le \exp(G/Z(G))\le var(G',Z(G))\le \exp(G')$. Thus $\exp(G')=var(G/Z(G),G/G')=\exp(G/Z(G))= var(G',Z(G))$ and hence the result.
\end{proof}

As another application of Lemma 2.2, we next find necessary and sufficient conditions on a finite non-abelian $p$-group $G$ such that $\IA(G)=\Aut_c(G)$. We start with the following lemma.

\begin{lm}
Let $G$ be a finite nilpotent group of class $2$ such that $d(G')=d(Z(G))$. Then $G$ is purely non-abelian.
\end{lm}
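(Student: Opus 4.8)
The plan is to argue by contradiction, using the decomposition of a finite group into a purely non-abelian part and an abelian direct factor, and then to read off the two invariants $d(G')$ and $d(Z(G))$ from that decomposition. Suppose that $G$ is not purely non-abelian. Since $G$ is non-abelian, I can choose a maximal abelian direct factor $A$ of $G$ and write $G=H\times A$ with $A\ne 1$. Maximality of $A$ forces $H$ to have no non-trivial abelian direct factor, and $H$ is non-abelian because $G$ is, so $cl(H)=2$.

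Next I would express both sides of the hypothesis through this splitting. Because $A$ is abelian, $G'=H'\times A'=H'$, so $d(G')=d(H')$; and $Z(G)=Z(H)\times A$. Here I use that $G$ is a finite $p$-group, which is the setting in which this lemma is stated and later applied, so that $H'$, $Z(H)$ and $A$ are all abelian $p$-groups. For an abelian $p$-group $X$ the rank $d(X)$ equals $\dim_{\mathbb{F}_p} X/\Phi(X)$, and this Frattini-quotient dimension is additive on direct products of $p$-groups; hence $d(Z(G))=d(Z(H))+d(A)$.

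I would then combine these identities with the class-$2$ hypothesis. Since $cl(H)=2$ we have $H'\le Z(H)$, and a subgroup of an abelian $p$-group has rank at most that of the group, so $d(H')\le d(Z(H))$. Chaining the inequalities gives $d(G')=d(H')\le d(Z(H))\le d(Z(H))+d(A)=d(Z(G))$. As $A\ne 1$ we have $d(A)\ge 1$, so the last inequality is strict and $d(G')<d(Z(G))$, contradicting the hypothesis $d(G')=d(Z(G))$. Therefore $A=1$, that is, $G$ admits no non-trivial abelian direct factor, and $G$ is purely non-abelian.

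The step I expect to carry the weight is the additivity $d(Z(H)\times A)=d(Z(H))+d(A)$, since this is exactly where the $p$-group structure enters: the rank of an abelian $p$-group is visible on its Frattini quotient and therefore adds across direct products. The remaining ingredients — the existence of a maximal abelian direct factor, the inheritance of class $2$ by $H$ as a non-abelian direct factor of $G$, and the monotonicity $d(H')\le d(Z(H))$ coming from the inclusion $H'\le Z(H)$ — are routine.
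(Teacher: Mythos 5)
Your argument is essentially the paper's own: assume $G$ is not purely non-abelian, split off a non-trivial abelian direct factor to write $G=H\times A$ with $H$ purely non-abelian, compute $G'=H'$ and $Z(G)=Z(H)\times A$, and contradict the hypothesis via $d(G')\le d(Z(H))<d(Z(G))$. The one substantive difference is your explicit restriction to $p$-groups in order to justify the strict inequality through additivity of rank on Frattini quotients, and this extra care is not pedantic. The lemma as printed is stated for finite \emph{nilpotent} groups of class $2$, and the paper's corresponding step, $d(Z(K))<d(Z(G))$ ``because $H$ is non-trivial,'' fails when mixed primes are present: for $G=Q_8\times C_3$, which is nilpotent of class $2$, one has $G'\simeq C_2$ and $Z(G)\simeq C_6$, so $d(G')=d(Z(G))=1$, yet $C_3$ is a non-trivial abelian direct factor. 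So the lemma is false in its stated generality and correct precisely in the $p$-group form you prove; since the paper only invokes it for $p$-groups (in Theorem 2.12), your version carries the application, but be aware that your parenthetical claim that the lemma ``is stated'' for $p$-groups is inaccurate --- it is stated for nilpotent groups, and your restriction is a repair of the statement rather than a restatement of it. The remaining ingredients of your write-up (maximal abelian direct factor, $H'\le Z(H)$ from class $\le 2$, monotonicity of rank for subgroups of abelian $p$-groups) are all sound.
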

\begin{proof} On the contrary, suppose that $G=H\times K$, where $H$ is a non-trivial abelian and $K$ is a purely non-abelian subgroup of $G$. Then $Z(G)=H\times Z(K)$ and $G'=K'\le Z(K)$. It follows that $d(G')\le d(Z(K))<d(Z(G))$, because $H$ is non-trivial. This is a contradiction and hence $G$ is purely non-abelian. 
\end{proof}

\begin{thm}
Let $G$ be a finite non-abelian $p$-group. Then
$\IA(G)=\Aut_c(G)$ if and only if either $G'=Z(G)$ or $G'<Z(G)$, $d(G')=d(Z(G))$ and $\exp(G/G')\le var(G',Z(G))$.
\end{thm}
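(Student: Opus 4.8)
The plan is to reduce the equality $\IA(G)=\Aut_c(G)$ to the numerical criterion of Lemma~\ref{MT}$(i)$, applied with $A=G/G'$, $B=G'$ and $C=Z(G)$, once $G$ has been placed in the setting where both sides are computed by $\Hom$-groups. For the forward direction, I first record that every inner automorphism $\mathrm{inn}_g\colon x\mapsto g^{-1}xg$ satisfies $x^{-1}\mathrm{inn}_g(x)=[x,g]\in G'$, so $\Inn(G)\le\IA(G)$ unconditionally. Thus $\IA(G)=\Aut_c(G)$ renders every inner automorphism central, i.e. $[G,g]\le Z(G)$ for all $g\in G$, forcing $G=Z_2(G)$; as $G$ is non-abelian this gives $cl(G)=2$ and $G'\le Z(G)$, whence $\IA(G)\le\Aut_c(G)$. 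In class $2$ any automorphism centralizing $G/G'$ also fixes $G'$ pointwise, since $\alpha([x,y])=[\alpha(x),\alpha(y)]=[x,y]$ because the $G'$-corrections are central; therefore $\IA(G)=\Aut_{G'}^{G'}(G)$, and Lemma~\ref{ML} gives $\IA(G)\simeq\Hom(G/G',G')$.

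The step I expect to be the main obstacle is showing that $\IA(G)=\Aut_c(G)$ forces $G$ to be purely non-abelian, which is exactly what is needed to invoke the Adney--Yen identity $|\Aut_c(G)|=|\Hom(G/G',Z(G))|$ of \cite[Theorem 1]{adnyen}. I would argue by contradiction: write $G=B\times K$ with $B\neq 1$ abelian and $K$ purely non-abelian (so $K$ is non-abelian, $G'=K'$ and $Z(G)=B\times Z(K)$), choose $t\in B$ of order $p$ and a surjection $\phi\colon K/K'\to\langle t\rangle$, and define $\alpha\in\Aut(G)$ by $\alpha(bk)=bk\,\phi(kK')$ for $b\in B,\ k\in K$. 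Since $\phi$ takes values in the central subgroup $\langle t\rangle$, a direct check shows $\alpha$ is an automorphism with $(bk)^{-1}\alpha(bk)=\phi(kK')\in Z(G)$, so $\alpha\in\Aut_c(G)$; but $\langle t\rangle\cap G'=\langle t\rangle\cap K'=1$ shows $\phi(kK')\notin G'$ whenever $\phi(kK')\neq 1$, and such $k$ exists because $\phi$ is surjective, so $\alpha\notin\IA(G)$, contradicting the hypothesis. Hence $G$ is purely non-abelian, and $|\IA(G)|=|\Aut_c(G)|$ becomes $|\Hom(G/G',G')|=|\Hom(G/G',Z(G))|$. As $G'\le Z(G)$ makes $\Hom(G/G',G')$ a subgroup of $\Hom(G/G',Z(G))$, this equality of orders upgrades to $\Hom(G/G',G')=\Hom(G/G',Z(G))$, and Lemma~\ref{MT}$(i)$ yields precisely the alternatives $G'=Z(G)$, or $d(G')=d(Z(G))$ together with $\exp(G/G')\le var(G',Z(G))$.

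For the converse I would run the computation backwards. Either listed condition forces $G'\le Z(G)$, hence $cl(G)=2$ and $\IA(G)\le\Aut_c(G)$, and also forces $d(G')=d(Z(G))$ (trivially in the case $G'=Z(G)$); the preceding lemma then makes $G$ purely non-abelian, so again $\IA(G)\simeq\Hom(G/G',G')$ and $|\Aut_c(G)|=|\Hom(G/G',Z(G))|$. Feeding the conditions into Lemma~\ref{MT}$(i)$ gives $\Hom(G/G',G')=\Hom(G/G',Z(G))$, hence $|\IA(G)|=|\Aut_c(G)|$; combined with $\IA(G)\le\Aut_c(G)$ this forces $\IA(G)=\Aut_c(G)$. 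Thus the only genuinely new ingredient beyond bookkeeping with Lemmas~\ref{ML} and~\ref{MT}, the preceding lemma, and the Adney--Yen count is the explicit construction of a central non-IA automorphism from an abelian direct factor; everything else is assembly.
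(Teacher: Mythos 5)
Your proposal is correct and follows essentially the same route as the paper: reduce to class $2$, show $G$ is purely non-abelian by constructing a central non-IA automorphism from an abelian direct factor (your $\alpha(bk)=bk\,\phi(kK')$ is the paper's $f^*(ab)=abf(b)$ with $f$ chosen explicitly), invoke the Adney--Yen count, upgrade the order equality of $\Hom$-groups to equality via the subgroup inclusion, and apply Lemma~\ref{MT}$(i)$, with Lemma 2.11 handling pure non-abelianness in the converse. The only cosmetic differences are that you justify in detail the steps the paper compresses (why $cl(G)=2$, why $\IA(G)\simeq\Hom(G/G',G')$) and that you run the $G'=Z(G)$ case of the converse through the same machinery instead of dismissing it as trivial, both of which are fine.
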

\begin{proof}
First suppose that $\IA(G)=\Aut_c(G)$. Then nilpotence class of $G$ is 2 and $\IA(G)\simeq \Hom(G/G',G')$. We prove that $G$ is purely non-abelian. Assume that $G=A\times B$, where $A$ is abelian and $B$ is purely non-abelian. For $1\ne f\in\Hom(B, A)$, define $f^*:G\rightarrow G$ by $f^*(ab)=abf(b)$, $a\in A,\;b\in B$. It is easy to see that $f^*$ is a central but not an IA-automorphism of $G$. This is against the assumption and thus $G$ is purely non-abelian. It now follows from \cite[Theorem 1]{adnyen} that $|\Cent(G)|=|\Hom(G/G',Z(G))|$. Thus $|\Hom(G/G',G')|=|\Hom(G/G',Z(G))|$ and hence $\Hom(G/G',G')=\Hom(G/G',Z(G))$, because $\Hom(G/G',G')$ is a subgroup of $\Hom(G/G',Z(G))$. The result now follows from Lemma~\ref{MT} $(i)$ on replacing $A$ by $G/G'$, $B$ by $G'$ and $C$ by $Z(G)$.

Conversely, if $G'=Z(G)$, then trivially $\IA(G)=\Aut_c(G)$. Therefore suppose that $G'<Z(G)$, $d(G')=d(Z(G))$ and $\exp(G/G')\le var(G',Z(G))$. By Lemma 2.11, $G$ is purely non-abelian. It now follows by \cite[Theorem 1]{adnyen}, Lemma~\ref{ML} and Lemma~\ref{MT}$(i)$ that
$|\IA(G)|=|\Hom(G/G',G')|=
|\Hom(G/G',Z(G))|=|\Aut_c(G)|$
and hence $\IA(G)=\Aut_c(G)$, because $\IA(G)\le\Aut_c(G)$.
\end{proof}

\

\end{document}